\title{Small Quotients of Braid Groups}
\author{Noah Caplinger \and Kevin Kordek }
\date{July 2020}
\newtheorem{theorem}{Theorem}
\newtheorem{lemma}{Lemma}
\newtheorem{question}{Question}
\newtheorem{definition}{Definition}
\newcommand{\inv}{^{-1}}
\newcommand{\s}{\sigma}
\newcommand{\Z}{\mathbb{Z}}
\begin{document}

\maketitle

\begin{abstract}
    We prove that the symmetric group $S_n$ is the smallest non-cyclic quotient of the braid group $B_n$ for $n=5,6$ and that the alternating group $A_n$ is the smallest non-trivial quotient of the commutator subgroup $B_n'$ for $n = 5,6,7,8$. We also give an improved lower bound on the order of any non-cyclic quotient of $B_n$. 
    
\end{abstract}

\section{Introduction}

Let $B_n$ denote the braid group on $n$ strands, and $B_n'$ its commutator subgroup. We are be interested in studying quotients of $B_n$. In light of the fact that the abelianization $B_n/B_n'$ is infinite cyclic, it is simple to create `uninteresting' homomorphisms $B_n \to G$ that factor through $\mathbb{Z}$. Such a homomorphism is said to be cyclic. Any non-cyclic map $B_n \to G$ gives a non-trivial map $B_n' \to G$ by restriction, so non-cyclic quotients of $B_n$ are closely related to non-trivial quotients of $B_n'$. 

There is a canonical map $B_n \to S_n$, which gives the alternating group $A_n$ as a quotient of $B_n'$ by restriction. It is not known whether there are quotients of smaller order. In \cite{finite_quotients}, Margalit asked whether $S_n$ and $A_n$ are indeed the smallest (resp. non-cyclic, non-trivial) quotients of $B_n$ and $B_n'$ respectively. We further break up this question as follows:

\begin{question}[Numerical level]
\label{S_n_numerical_level}

For $n \geq 5$, if $G$ is a non-cyclic quotient of $B_n$, must $|G| \geq n! \;$?
\end{question}

\begin{question}[Group level]
\label{S_n_group_level}

Is the following true for $n \geq 5$: if $G$ is a non-cyclic quotient of $B_n$ then either $G \cong S_n$ or $|G| > n!\;$ ? 
\end{question}

We could ask similar questions about $B_n'$ and $A_n$, but since $B_n'$ is perfect, its smallest non-trivial quotient must be non-abelian and simple. There are few non-isomorphic simple groups with the same order, so we will not distinguish between these questions for $B_n'$.

\begin{question}
\label{A_n_question}

Is the following true for $n \geq 5$: if $G$ is a non-trivial quotient of $B_n'$ then $G \cong A_n$ or $|G| > n!/2 \;$ ?

\end{question}

Our main result gives an affirmative answer to question \ref{A_n_question} for $n \in \{5,6,7,8\}$, to question \ref{S_n_numerical_level} for $n \in \{5,6,7\}$ and to question \ref{S_n_group_level} for $n \in \{5,6\}$. Thus, $A_n$ is the smallest quotient of $B_n'$ for $n \in \{5,6,7,8\}$, and $S_n$ is the smallest quotient of $B_n$  at the ``group level" (question \ref{S_n_group_level}) for $n \in \{5,6\}$ and at the ``numerical level" (question \ref{S_n_numerical_level}) for $n=7$.

\begin{theorem}
\label{A_n_group}
Let $n \in \{5,6,7,8\}$, and $H$ be a non-trivial quotient of $B_n'$. Then then $H \cong A_n$ or $|H| > n!/2$.
\end{theorem}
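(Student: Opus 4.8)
The plan is to reduce the statement to a question about finite simple groups and then eliminate candidates one at a time. If $H$ is infinite there is nothing to prove, so assume $H$ is finite and non-trivial. Since $B_n'$ is perfect for $n\ge 5$ (Gorin--Lin), its quotient $H$ is perfect; choosing a maximal proper normal subgroup $M\triangleleft H$, the quotient $H/M$ is a finite simple group which, being a quotient of the perfect group $H$, is non-abelian. Thus $S:=H/M$ is a non-abelian finite simple quotient of $B_n'$ with $|S|\le|H|$. On the other hand, the canonical surjection $B_n\to S_n$ carries $B_n'$ onto $S_n'=A_n$ (its image in $S_n$ is normal with abelian quotient, hence contains $A_n$, and consists of products of commutators, hence lies in $A_n$), so $A_n$ is itself a quotient of $B_n'$, of order $n!/2$. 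The theorem therefore follows from the following:

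\medskip
\noindent\emph{Claim.} For $n\in\{5,6,7,8\}$, the only non-abelian finite simple group of order at most $n!/2$ that is a quotient of $B_n'$ is $A_n$.

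\medskip
\noindent Indeed, granting the Claim, if $|H|\le n!/2$ then $|S|\le|H|\le n!/2$ forces $S\cong A_n$, so $H$ has the quotient $H/M\cong A_n$ of order $n!/2\ge|H|$; hence $M=1$ and $H\cong A_n$.

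To prove the Claim I would first invoke the classification of finite simple groups (equivalently, the classical enumeration of simple groups of order at most $20160$) to write down the short finite list of candidates $S$: for $n=5$ only $A_5$; for $n=6$ also $\mathrm{PSL}_2(7)$ and $A_6$; for $n=7,8$ a handful of further groups such as $\mathrm{PSL}_2(q)$ for small $q$, $A_7$, $\mathrm{PSL}_3(3)$, $\mathrm{PSU}_3(3)$, $M_{11}$, and $\mathrm{PSL}_3(4)$. For each candidate $S\not\cong A_n$ one must show $B_n'$ has no quotient isomorphic to $S$. I would work from an explicit finite presentation of $B_n'$ — either the Gorin--Lin presentation, or the one obtained from the standard presentation of $B_n$ by a Reidemeister--Schreier computation relative to $B_n'=\ker(B_n\to\Z)$ — so that the question becomes whether a specific finitely presented group admits an epimorphism onto a specific small finite group, which is a finite computation (enumerate normal subgroups of index $|S|$, or generating tuples of $S$ satisfying the relators modulo $\mathrm{Aut}(S)$). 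Where possible I would also argue by hand from the braid relations: the images of the $\s_i$ are pairwise conjugate, $\s_i$ and $\s_j$ commute for $|i-j|\ge 2$, and $\langle\s_i,\s_{i+1}\rangle$ is a quotient of $B_3$, which severely restricts the order and conjugacy class of $\phi(\s_1)$ inside $S$; combining this with known rigidity for homomorphisms $B_m\to S_\ell$ (Artin, Lin), applied through a minimal faithful permutation representation of $S$, eliminates most candidates directly.

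The main obstacle is this last step for the simple groups whose order is closest to $n!/2$, and above all $\mathrm{PSL}_3(4)$ when $n=8$: it has the same order $20160$ as $A_8$ but is not isomorphic to it, so the order bound gives no leverage and one must genuinely rule it out — the natural route being to show that any surjection $B_8'\twoheadrightarrow S$ is forced into the permutation / mod-$2$ reduced Burau picture responsible for $A_8\cong\mathrm{PSL}_4(2)$, which $\mathrm{PSL}_3(4)$ cannot accommodate. A secondary technical point is that a surjection $B_n'\twoheadrightarrow S$ need not extend to $B_n$, so any argument passing through quotients of $B_n$ itself must first be legitimized, e.g.\ by replacing $\ker\phi$ with its $B_n$-core or by observing that $B_n$ permutes the finite set of index-$|S|$ normal subgroups of $B_n'$.
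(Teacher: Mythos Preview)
Your high-level strategy---reduce via perfection of $B_n'$ to non-abelian simple quotients, enumerate these using the classification, then eliminate each non-$A_n$ candidate---is exactly the paper's, and your reduction lemma (if $|H|\le n!/2$ then $H\cong A_n$) is clean and correct. The substantive difference is in how the eliminations are carried out. The paper's main tool is the \emph{totally symmetric set} $X_n'=\{\sigma_{2i-1}\sigma_1^{-1}\}_{i\ge 2}\subset B_n'$: a set of pairwise commuting elements every permutation of which is achieved by an inner automorphism, so that any homomorphism either collapses it to a point (forcing triviality, since $\sigma_1\sigma_3^{-1}$ normally generates $B_n'$) or preserves its cardinality. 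By analyzing which subgroups of $\mathrm{PSL}_2(q)$ can surject onto $S_3$ and checking a braid-relation incompatibility in the sole surviving case $S_4$, the paper proves by hand that for $n\ge 8$ and $q$ a power of a prime $\ne 2,3$ (and also $q=8,27$) there is no non-trivial map $B_n'\to\mathrm{PSL}_2(q)$. This disposes of most of the list without computation; the residual cases ($\mathrm{PSL}_2(7)$ for $n=6$; several small $\mathrm{PSL}_2(q)$ for $n=7$; and $\mathrm{PSL}_3(3)$, $G_2(2)'$, $M_{11}$, $\mathrm{PSL}_3(4)$ for $n=8$) are finished by a computer search over Lin's presentation of $B_n'$, essentially the enumeration you propose. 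So your plan is valid and would succeed, but the totally-symmetric-set lemma is the ingredient you are missing that keeps the computation short and supplies the conceptual content; your suggested hand arguments from the braid relations are pointed in the right direction but would need to be organized into something of this shape to be effective across the whole $\mathrm{PSL}_2$ family. Your concern about $\mathrm{PSL}_3(4)$ is on target---the paper, too, handles it only by machine---and your remark that a surjection from $B_n'$ need not extend to $B_n$ is exactly why the paper works with $X_n'\subset B_n'$ directly rather than via $X_n\subset B_n$.
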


\begin{theorem}
\label{B_n_numerical}
Let $n = 7$, and $G$ be a non-cyclic quotient of $B_n$. Then $|G| \geq n!$.
\end{theorem}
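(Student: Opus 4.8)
We may assume $\phi\colon B_7\to G$ is surjective. Since $\sigma_1,\dots,\sigma_6$ are mutually conjugate and map to a generator of $B_7^{ab}\cong\Z$, the map $\phi$ is non-cyclic if and only if $\phi(B_7')=G'$ is non-trivial; moreover $G/G'=G^{ab}$ is a quotient of $\Z$, hence cyclic. Applying Theorem \ref{A_n_group} with $n=7$ to the surjection $B_7'\twoheadrightarrow G'$ gives $G'\cong A_7$ (so $|G'|=7!/2$) or $|G'|>7!/2$; in either case $|G'|\ge 7!/2=2520$. If $G^{ab}$ is non-trivial then $[G:G']\ge 2$, so $|G|=[G:G']\,|G'|\ge 2\cdot 2520=7!$ and we are done (if $G^{ab}$ is infinite there is nothing to prove). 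So it remains to handle the case that $G$ is perfect.

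\textbf{The perfect case.} If $G=G'$ then $\phi$ restricts to a surjection $B_7'\twoheadrightarrow G$, so $G$ is itself a quotient of $B_7'$. Choose a maximal normal subgroup $M\triangleleft G$; then $S:=G/M$ is a non-abelian simple quotient of $B_7'$. By Theorem \ref{A_n_group}, either $S\cong A_7$, in which case $|G|=|M|\cdot 2520\ge 7!$ unless $M=1$ and $G\cong A_7$; or $|S|>2520$, in which case $|G|=|M|\,|S|\ge 7!$ unless $M=1$ and $G=S$ is simple of order strictly between $2520$ and $5040$. Thus, assuming for contradiction $|G|<7!$, we are reduced to just two configurations: (i) $\phi\colon B_7\twoheadrightarrow A_7$, and (ii) $\phi\colon B_7\twoheadrightarrow S$ with $S$ a non-abelian simple group with $2520<|S|<5040$. (Note this step genuinely uses Theorem \ref{A_n_group} to kill non-simple perfect groups such as $A_5\times A_5$ of order $3600$, since $A_5$ would then be a quotient of $B_7'$.)

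\textbf{Excluding (i).} A surjection $B_7\twoheadrightarrow A_7$ composed with $A_7\hookrightarrow S_7$ is a homomorphism $B_7\to S_7$ with non-cyclic (indeed primitive) image $A_7$; but by Artin's classification of homomorphisms $B_n\to S_n$ (and its refinements by Lin), for $n=7$ every such homomorphism has cyclic image or is conjugate to the standard projection, whose image is $S_7\ne A_7$ — a contradiction. Equivalently, a point stabilizer would be an index-$7$ subgroup of $B_7$, and for $n=7$ all index-$n$ subgroups are conjugate to the standard one. For (ii), the classification of finite simple groups of small order leaves only $S\cong\mathrm{PSL}_2(19)$ (order $3420$) and $S\cong\mathrm{PSL}_2(16)$ (order $4080$), so it suffices to show that neither of these is a quotient of $B_7'$.

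\textbf{The main obstacle.} The remaining work — configuration (ii) — is the heart of the matter, and I expect it to be attacked with exactly the tools behind Theorem \ref{A_n_group}. A surjection $B_7\to S$ sends each $\sigma_i$ into a single conjugacy class $C$ of $S$ that normally generates $S$, and the braid relations $\sigma_i\sigma_{i+1}\sigma_i=\sigma_{i+1}\sigma_i\sigma_{i+1}$ together with $\sigma_i\sigma_j=\sigma_j\sigma_i$ for $|i-j|\ge 2$ force the tuple $(\phi(\sigma_1),\dots,\phi(\sigma_6))\in C^6$ into a very restricted set. Enumerating the few normally-generating classes $C$ in $\mathrm{PSL}_2(16)$ and $\mathrm{PSL}_2(19)$ and checking the braid relations against them — using the well-understood subgroup and character structure of $\mathrm{PSL}_2(q)$, or a direct computation from a finite presentation of $B_7$ — should eliminate all of them. (It is quite plausible that the computation proving Theorem \ref{A_n_group} already covers these two groups, in which case (ii) is immediate and no extra work is needed.)
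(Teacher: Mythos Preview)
Your reduction is exactly the paper's: reduce to $G'=\phi(B_7')$, invoke Theorem~\ref{A_n_group} to get $|G'|\ge 7!/2$, dispose of the non-perfect case by an index-$2$ argument, and in the perfect case reduce to $G\cong A_7$ or $G$ simple of order in $(2520,5040)$. Your exclusion of $A_7$ via Artin is also the paper's. So the overall architecture is the same.

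The gap is that you do not actually carry out configuration~(ii); you describe a plan and speculate that the work behind Theorem~\ref{A_n_group} already covers $\mathrm{PSL}(2,19)$ and $\mathrm{PSL}(2,16)$. This speculation is only half right. The paper disposes of $\mathrm{PSL}(2,16)$ by a computer check that is indeed bundled with the $n=7$ proof of Theorem~\ref{A_n_group} (it verifies there is no non-trivial $B_7'\to\mathrm{PSL}(2,16)$). But $\mathrm{PSL}(2,19)$ has order $3420>2520$, so Theorem~\ref{A_n_group} and its proof say nothing about it, and the paper does \emph{not} show that $\mathrm{PSL}(2,19)$ fails to be a quotient of $B_7'$. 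Instead the paper invokes Lemma~\ref{big_PSL}: for $n\ge 6$ and $q$ a prime power with residue characteristic $\ne 2,3$, there is no non-cyclic map $B_n\to\mathrm{PSL}(2,q)$. This is a structural (non-computational) argument via totally symmetric sets and the subgroup list of $\mathrm{PSL}(2,q)$, and it directly rules out $\phi\colon B_7\twoheadrightarrow\mathrm{PSL}(2,19)$. In your framework that is enough: you only need that $G=\mathrm{PSL}(2,19)$ is impossible as a non-cyclic quotient of $B_7$, not the stronger claim that it is impossible as a quotient of $B_7'$. So your phrasing ``it suffices to show that neither of these is a quotient of $B_7'$'' asks for more than is needed or proved; the weaker $B_7$-level statement from Lemma~\ref{big_PSL} closes the $\mathrm{PSL}(2,19)$ case without any computation.
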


\begin{theorem}
\label{B_n_group}
Let $n \in \{5,6\}$, and $G$ be a non-cyclic quotient of $B_n$. Then $G \cong S_n$ or $|G| > n!$.
\end{theorem}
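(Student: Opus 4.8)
The plan is to bootstrap from Theorem~\ref{A_n_group}. Let $\phi\colon B_n \twoheadrightarrow G$ be a non-cyclic quotient with $|G|\le n!$; I must show $G\cong S_n$. Put $N=\phi(B_n')\trianglelefteq G$. Since $B_n/B_n'\cong\Z$, the quotient $G/N$ is cyclic, and since $\phi$ is non-cyclic, $N\ne 1$; thus $\phi$ restricts to a non-trivial surjection $B_n'\twoheadrightarrow N$, and Theorem~\ref{A_n_group} gives $N\cong A_n$ or $|N|>n!/2$. In particular $|N|\ge n!/2$, so $[G:N]\le 2$.

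Now split on $[G:N]$. If $[G:N]=1$ then $G=N$ is a \emph{perfect} group (the image of the perfect group $B_n'$) with $|G|\le n!$; by Theorem~\ref{A_n_group} again, $G\cong A_n$ or $n!/2<|G|\le n!$. If $[G:N]=2$ then $|N|=|G|/2\le n!/2$; combined with $|N|\ge n!/2$ this forces $|N|=n!/2$, whence the dichotomy of Theorem~\ref{A_n_group} (the order being $n!/2$, not strictly larger) gives $N\cong A_n$, and $G$ is an extension $1\to A_n\to G\to\Z/2\to 1$ with $|G|=n!$. So everything reduces to three tasks: (a) $B_n$ does not surject onto $A_n$; (b) for $n=6$, $B_6$ does not surject onto $\mathrm{PGL}_2(9)$ or $M_{10}$; (c) $B_n$ has no perfect quotient $G$ with $n!/2<|G|\le n!$.

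Once (a)--(c) are in hand the proof closes quickly. For the extension case: the composite $B_n\to G\to\Z/2$ factors through $B_n^{\mathrm{ab}}\cong\Z$, hence is the unique surjection $B_n\to\Z/2$ (all $\sigma_i\mapsto 1$); since $Z(A_n)=1$ for $n\ge 5$, $G$ is determined up to isomorphism by a homomorphism $\Z/2\to\operatorname{Out}(A_n)$. The trivial homomorphism yields $G\cong A_n\times\Z/2$, and then composing $\phi$ with the projection to the $A_n$-factor contradicts~(a). For $n=5$, $\operatorname{Out}(A_5)\cong\Z/2$ and the only remaining possibility is $G\cong S_5$. For $n=6$, $\operatorname{Out}(A_6)\cong\Z/2\times\Z/2$ leaves exactly $S_6$, $\mathrm{PGL}_2(9)$, $M_{10}$, and~(b) eliminates the last two. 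The perfect case is killed by~(a) when $G\cong A_n$, and by~(c) otherwise; here (c) is a finite check since the perfect groups of order in $(n!/2,n!]$ form a short explicit list ($\mathrm{SL}_2(5)$ for $n=5$; $\mathrm{PSL}_2(8)$, $\mathrm{PSL}_2(11)$, $\mathrm{SL}_2(9)=2.A_6$ for $n=6$).

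The main obstacle is establishing (a), and then (b) and (c): these are not order-theoretic and must be settled by ruling out the relevant surjections directly. I would approach (a) via the known structure of homomorphisms $B_n\to S_n$ for $n\ge 5$ --- a non-cyclic such homomorphism has image isomorphic to $S_n$, never $A_n$ --- or, failing a citable statement in the needed generality, by a finite computation: a surjection $B_n\to A_n$ would send the Artin generators to a generating tuple of mutually conjugate elements of $A_n$ obeying the braid and far-commutation relations (with $\langle\sigma_i,\sigma_{i+1}\rangle\cong B_3$ forcing a braid-relation pair), and one checks over the conjugacy classes of $A_n$ that no such tuple exists. Items (b) and (c) are of the same nature and, for the small groups involved, are most cleanly dispatched by the same low-index/coset-enumeration computations underlying Theorem~\ref{A_n_group}. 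I expect (b) --- excluding $\mathrm{PGL}_2(9)$ and $M_{10}$, which are exactly the ``exotic'' index-two overgroups of $A_6$ inside $\operatorname{Aut}(A_6)$ --- to be the most delicate point, since it is precisely the place where the exceptional outer automorphism of $A_6$ could, a priori, manufacture an unexpected quotient.
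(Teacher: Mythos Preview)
Your argument is correct and lands on the same three obstructions as the paper --- Artin for $B_n\twoheadrightarrow A_n$, computer checks for $\mathrm{PGL}_2(9)$ and $M_{10}$, and a short finite list of remaining candidates --- but the organization differs in two ways worth noting.

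First, the paper proceeds in two stages: it first proves $|G|\ge n!$ (handling $\mathrm{PSL}_2(8)$ and $\mathrm{PSL}_2(11)$ along the way for $n=6$), and then simply lists all non-solvable groups of order exactly $n!$ and eliminates each. You instead split on $[G:N]\in\{1,2\}$ and, in the index-$2$ case, classify extensions of $A_n$ by $\Z/2$ via $\operatorname{Out}(A_n)$. This is more conceptual and explains \emph{why} the exotic candidates $\mathrm{PGL}_2(9)$ and $M_{10}$ appear exactly at $n=6$: they correspond to the two non-inner involutions in $\operatorname{Out}(A_6)\cong(\Z/2)^2$. The paper's enumeration recovers the same list but as a lookup rather than a structural statement.

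Second, for your item~(c) you propose machine checks, while the paper disposes of these groups without further computation: $\mathrm{SL}_2(5)$ surjects onto $A_5$, reducing to~(a); $\mathrm{SL}_2(9)\subset\mathrm{GL}_2(\mathbb{F}_9)$ admits no totally symmetric set of size~$3$ by Lemma~\ref{TSS_in_linear}; and $\mathrm{PSL}_2(8)$, $\mathrm{PSL}_2(11)$ are handled by the subgroup analysis of $\mathrm{PSL}_2(q)$ in Lemmas~\ref{big_PSL} and~\ref{small_PSL}. So the paper trades your extra computer verifications for the totally-symmetric-set machinery developed earlier; your route is more self-contained but leans harder on computation.
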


We also give a refinement of the main theorem in \cite{finite_quotients}, which states that any non-cyclic quotient of $B_n$ must have order at least $2^{\lfloor n/2 \rfloor - 1}\lfloor n/2 \rfloor !$.

\begin{theorem}
\label{thmBound}
Let $n \geq 5$ and $G$ be a non-cyclic quotient of $B_n$. Then $$|G| \geq 3^{\lfloor n/2 \rfloor - 1}\lfloor n/2 \rfloor !$$
\end{theorem}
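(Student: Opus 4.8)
The plan is to push through the same strategy that yields the $2^{\lfloor n/2\rfloor-1}\lfloor n/2\rfloor!$ bound in \cite{finite_quotients}, replacing the prime $2$ by the prime $3$ wherever the argument permits. Recall the structural input: the braid group $B_n$ is generated by the standard generators $\s_1,\dots,\s_{n-1}$ with the braid relations, and under a non-cyclic homomorphism $\varphi\colon B_n\to G$ the images $\varphi(\s_i)$ are forced to have a common order that is either infinite or a finite integer $m$; the non-cyclicity forces $m\geq 3$ (if $m\le 2$ one checks the braid relations collapse the group to a cyclic image, since $\s_i\s_{i+1}$ of order dividing something small forces $\varphi(\s_i)=\varphi(\s_{i+1})$). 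So the first step is to establish: \emph{for a non-cyclic quotient, the common image order $m$ of the generators satisfies $m\ge 3$, and moreover the subgroup generated by two "distant" commuting generators $\varphi(\s_1),\varphi(\s_3),\varphi(\s_5),\dots$ is a direct product of cyclic groups each of order $\ge 3$.}

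Next I would exploit the commuting pattern: among $\s_1,\dots,\s_{n-1}$ the odd-indexed generators $\s_1,\s_3,\dots,\s_{2\lfloor n/2\rfloor-1}$ pairwise commute, giving $\lfloor n/2\rfloor$ mutually commuting elements in $G$, each of order divisible by $m\ge 3$. This already produces a subgroup of order at least $3^{\lfloor n/2\rfloor}$, but that is weaker than claimed in the exponent — so the real content is to combine this abelian part with the permutation-type action. The key step is to build a homomorphism (or a transitive action) from the subgroup $\langle \varphi(\s_i)\rangle$ onto something like a wreath-product skeleton $C_m \wr S_{\lfloor n/2\rfloor}$: the even-indexed generators $\s_2,\s_4,\dots$ conjugate the odd ones into each other in the pattern of adjacent transpositions on $\lfloor n/2\rfloor$ points, so the image contains (a quotient mapping onto) $C_m^{\lfloor n/2\rfloor} \rtimes S_{\lfloor n/2\rfloor}$-like data. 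Following \cite{finite_quotients}, one extracts that $|G|$ is at least $m^{\lfloor n/2\rfloor - 1}\cdot \lfloor n/2\rfloor!$ — the drop of one in the exponent coming from the fact that only the "difference" classes of the abelian coordinates are pinned down by the relations (equivalently, one $C_m$ factor may be absorbed into the central $\Z$ that the abelianization already accounts for). Substituting $m\ge 3$ gives the bound.

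Concretely the steps are: (1) reduce to the case where all $\varphi(\s_i)$ have finite order $m$ (the infinite-order case makes $|G|$ infinite, hence trivially $\ge$ anything); (2) prove $m\ge 3$ by showing $m=1$ gives the trivial group and $m=2$ forces all generators equal, hence a cyclic image; (3) identify inside $G$ the subgroup $N=\langle \varphi(\s_1),\varphi(\s_3),\dots\rangle$, show it is abelian and that the relations force it to surject onto $C_m^{\lfloor n/2\rfloor}/(\text{diagonal})$, i.e. $|N|\ge m^{\lfloor n/2\rfloor-1}$; (4) show the even generators induce the full symmetric group $S_{\lfloor n/2\rfloor}$ permuting the $\lfloor n/2\rfloor$ "blocks", and that this action is faithful enough that $|G|\ge |N|\cdot\lfloor n/2\rfloor! \ge 3^{\lfloor n/2\rfloor-1}\lfloor n/2\rfloor!$; (5) conclude.

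The main obstacle I expect is step (4) combined with the bookkeeping in step (3): one must be careful that the $S_{\lfloor n/2\rfloor}$-action genuinely contributes a multiplicative factor $\lfloor n/2\rfloor!$ beyond $|N|$ rather than merely permuting within a set already counted — this requires either exhibiting an explicit surjection $\varphi(B_n)\twoheadrightarrow \big(C_m^{\lfloor n/2\rfloor-1}\big)\rtimes S_{\lfloor n/2\rfloor}$ built from the braid relations, or arguing via a minimal-faithful-permutation-degree / orbit-counting estimate that the semidirect structure cannot collapse. The case $n$ odd versus $n$ even needs a brief separate check since the "leftover" generator $\s_{n-1}$ (when $n$ is even) interacts with the last block, but it does not decrease the bound. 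I would borrow as much of the machinery of \cite{finite_quotients} as possible verbatim, flagging only the places where "order $\ge 2$" is upgraded to "order $\ge 3$".
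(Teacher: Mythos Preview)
Your outline follows the same broad strategy as the paper (upgrade the $2$ in the \cite{finite_quotients} bound to a $3$), but two of your steps contain genuine gaps that the paper's argument is specifically designed to circumvent.

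First, step (2) is false as stated: $m=2$ does \emph{not} force all generators equal. Adding the relations $\s_i^2=1$ to the braid presentation gives the Coxeter presentation of $S_n$, so a non-cyclic quotient with $m=2$ is a non-cyclic quotient of $S_n$; for $n\ge 5$ this forces $G\cong S_n$, which is certainly not cyclic. The paper handles this case separately by observing that $|S_n|=n!>3^{\lfloor n/2\rfloor-1}\lfloor n/2\rfloor!$, and only then restricts attention to $m\ge 3$.

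Second, and more seriously, in step (3) you assert that $N=\langle\varphi(\s_1),\varphi(\s_3),\dots\rangle$ surjects onto $C_m^{\lfloor n/2\rfloor}/(\text{diagonal})$, hence $|N|\ge m^{\lfloor n/2\rfloor-1}$. This need not hold: nothing prevents an additional relation $\varphi(\s_1)^p=\varphi(\s_3)^p$ for some $p<m$, and such a relation does \emph{not} lie in the diagonal. The lemma you are implicitly invoking from \cite{finite_quotients} only yields $|N|\ge p^{\lfloor n/2\rfloor-1}$, where $p$ is the minimal positive integer with $\varphi(\s_1)^p=\varphi(\s_3)^p$. So knowing $m\ge 3$ is not enough; you need $p\ge 3$. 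The paper closes this gap by first reducing to the case where $G$ is a non-cyclic quotient of \emph{minimal} order, and then proving that in that case $p=m$: if $p<m$ one checks that $H=\langle\varphi(\s_1)^p\rangle$ is normal in $G$ and that $G/H$ is still non-cyclic, contradicting minimality. With $p=m\ge 3$ in hand, the \cite{finite_quotients} machinery gives $|G|\ge 3^{\lfloor n/2\rfloor-1}\lfloor n/2\rfloor!$ for the minimal quotient, hence for all non-cyclic quotients. Your wreath-product heuristic in step (4) is morally the same as the $\text{Stab}(T)\to\text{Sym}(T)$ argument already in \cite{finite_quotients}, so once (2) and (3) are repaired along these lines the rest goes through.
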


\vspace{.5cm}
\noindent
\textit{Prior results.} Artin \cite{Artin} classified all homomorphisms $B_n \to S_n$ with transitive image---for $n \neq 4,6$, they must be either cyclic or conjugate to the canonical map. Artin also described all exceptional homomorphisms in the case where $n \in \{4,6\}$. These results were greatly expanded by Lin \cite{Lin}, who broadly characterized  homomorphisms from braid groups to symmetric groups. Among many other results, he proved that any homomorphism $B_n \to S_k$ with $k < n$ is cyclic, and that all transitive homomorphisms $B_n \to S_m$ with $6 < n < m < 2n$ are cyclic. The general flavor of these results is that, with exceptions, the standard projection is essentially the only non-cyclic homomorphism from the braid group to a symmetric group of small enough order.

Zimmermann \cite{zimmermann2012} proved that for $g \in \{3,4\}$, the smallest minimal non-trivial quotient of the genus $g$ mapping class group $\text{Mod}(S_g)$ is the symplectic group $\text{Sp}_g(\Z/2)$. He then conjectured that the result held for $g \geq 3$, which was later proven by Kielak-Pierro \cite{KielakPierro}. Margalit's question can be seen as the braid group analogue of Zimmermann's.

In \cite{first_TSS} Margalit and the second author classified all homomorphisms from $B_n' \to B_n$ for $n \geq 7$. In doing so, they introduced the notion of a totally symmetric set, which will be the main tool in this paper. Chudnovsky, Li, Partin and the second author \cite{finite_quotients} then used totally symmetric sets to derive a lower bound on the order of a non-cyclic quotient, which we refine with theorem \ref{thmBound}.

\vspace{.3cm}

\textbf{Overview.} In section 2, we review the definition and properties of totally symmetric sets, which will be central to the paper. In section 3 we give proofs of theorems  \ref{A_n_group}, \ref{B_n_numerical}, and \ref{B_n_group}. In section 4, we give a proof of theorem \ref{thmBound}. In the Appendix we give an explanation of the algorithms used in the computer checks and prove lemma \ref{TSS_in_linear}.   

\vspace{.3cm}

\textbf{Acknowledgments.} The authors would like to thank Dan Margalit for the helpful comments and suggestions and for his constant encouragement throughout the project. We would also like to thank Caleb Partin for helpful conversations.

\section{Totally Symmetric Sets}
\label{TSS_section}

The main tool in our study of braid quotients is the theory of totally symmetric sets, introduced by Margalit and the second author in \cite{first_TSS}. A totally symmetric set $S$ is a set of pairwise commuting elements of a group $G$ such that every permutation of $S$ can be realized by conjugating $S$ by an element of $G$. 

\begin{definition}
Let $G$ be a group. We say $S  = \{g_1,\ldots, g_k\} \subset G$ is totally symmetric if 

\begin{enumerate}
    \item $g_ig_j = g_jg_i$ for all $i,j \in [k]$
    \item For every $\sigma \in S_k$, there exists some $h \in G$ so that $hg_ih\inv = g_{\sigma(i)}$ (the ``total symmetry" condition)
\end{enumerate}
\end{definition}

It will prove useful to think of totally symmetric sets in terms of group actions. If we let $G$ act on its subsets by conjugation, we say a subset $T \subset G$ is totally symmetric if its elements pairwise commute and the natural map $\text{Stab}(T) \to \text{Sym}(T) \cong S_{|T|}$ is surjective. Note also that $T$ is totally symmetric in both $G$ and $\text{Stab}(T)$.

\vspace{.5cm}

\noindent
\textit{Examples of totally symmetric sets.} The two main totally symmetric sets used in this paper are 

$$X_n = \{\s_{2i-1}\}_{i=1}^{i=\lfloor n/2 \rfloor } \subset B_n  \quad \text{and}$$  

$$X_n' =  \{\s_{2i-1} \s_1\inv \}_{i=2}^{\lfloor n/2 \rfloor} \subset B_n'.$$

\noindent
It is clear that the elements of $X_n$ and $X_n'$ pairwise commute, and an application of the change-of-coordinates principle from mapping class group theory (see \cite{Primer}) shows that they are totally symmetric in $B_n$. For a proof that $X_n'$ is also totally symmetric in $B_n'$, see \cite{finite_quotients}.

\vspace{.5cm}

The image of $X_n$ under the standard map $B_n \to S_n$ is $\{(1 \: 2), (3 \: 4), \ldots\}$, which also happens to be totally symmetric. This is no accident:

\begin{lemma}[Kordek--Margalit, \cite{first_TSS}]
If $S \subset G$ is a totally symmetric set and $f:G \to H$ is any homomorphism, then $f(S)$ is a totally symmetric set of cardinality $1$ or $|S|$. 
\end{lemma}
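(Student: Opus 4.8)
The plan is to show that for a homomorphism $f\colon G\to H$ and a totally symmetric set $S=\{g_1,\dots,g_k\}\subset G$, the image $f(S)$ is again totally symmetric, and that its cardinality is forced to be either $1$ or $k$. The commuting condition is immediate: since $g_ig_j=g_jg_i$ for all $i,j$, applying $f$ gives $f(g_i)f(g_j)=f(g_j)f(g_i)$, so the elements of $f(S)$ pairwise commute (this holds even after collapsing, since a set of pairwise-commuting elements stays pairwise-commuting when we pass to the underlying set).

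For the total symmetry of $f(S)$, the key step is to transport the conjugators. Given a permutation $\tau$ of the set $f(S)$, I would first lift it to a permutation $\sigma\in S_k$ of the indices that is compatible with the (possibly non-injective) map $i\mapsto f(g_i)$ — i.e.\ choose $\sigma$ so that $f(g_{\sigma(i)})$ realizes the target of $f(g_i)$ under $\tau$. By the total symmetry of $S$ there is $h\in G$ with $hg_ih^{-1}=g_{\sigma(i)}$ for all $i$; then $f(h)$ conjugates $f(g_i)$ to $f(g_{\sigma(i)})$, which is exactly the prescribed permutation $\tau$ of $f(S)$. Hence $\mathrm{Stab}(f(S))\to\mathrm{Sym}(f(S))$ is surjective, so $f(S)$ is totally symmetric.

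It remains to rule out intermediate cardinalities $1<|f(S)|<k$. Here the main obstacle — really the crux of the lemma — is a combinatorial/group-theoretic argument: suppose $f(g_i)=f(g_j)$ for some $i\neq j$ but $f$ is not constant on $S$, say $f(g_i)\neq f(g_\ell)$. Using total symmetry of $S$, pick $\sigma\in S_k$ with $\sigma(i)=i$ and $\sigma(j)=\ell$ (possible since $k\ge 3$ in the only case that matters), and let $h$ realize $\sigma$ by conjugation. Then conjugation by $f(h)$ fixes $f(g_i)$ but sends $f(g_j)$ to $f(g_\ell)$; since $f(g_i)=f(g_j)$, conjugation by $f(h)$ must send $f(g_i)$ to $f(g_\ell)$ as well, contradicting $f(g_i)\neq f(g_\ell)$. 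Therefore the fibers of $i\mapsto f(g_i)$ all have the same size, and an identification of any two of them (via a conjugator realizing a transposition) forces this common size to be either $k$ (all distinct) or $1$ (all equal). The delicate point to get right is the bookkeeping between permutations of the index set $[k]$ and permutations of the image set $f(S)$, and making sure the conjugator $h$ can be chosen to fix one index while moving another — which is where the structure of $S_k$ and the hypothesis $|S|=k$ are used.
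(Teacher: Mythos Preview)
Your argument is correct and follows essentially the same route as the paper: the paper likewise dismisses the total symmetry of $f(S)$ as clear and then, assuming two images coincide (say $f(g_1)=f(g_2)$), picks a conjugator fixing $g_1$ and sending $g_2$ to $g_i$ to conclude $f(g_1)=f(g_i)$ for all $i$. Your closing sentence about fibers having equal size is unnecessary---the contradiction you obtained already yields the dichotomy directly---and note that your lifting step in part~2 is only guaranteed once the cardinality dichotomy of part~3 is in hand, so strictly speaking the two parts should be swapped (the paper sidesteps this by simply asserting the total symmetry is clear).
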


\begin{proof}
It is clear that $f(S)$ is totally symmetric in $H$. Let $S = \{g_1,\ldots, g_k\}$ and assume $f(g_1) = f(g_2)$. By total symmetry, for every $i \geq 3$ there is some $h \in G$ so that $hg_1h\inv = g_1$ and $hg_2h\inv = g_i\inv$. Then conjugating $f(g_1) = f(g_2)$ by $f(h)$ gives $f(g_1) = f(g_i)$ for all $i$. 
\end{proof}

This lemma gives us an indirect way to probe the injectivity of a map $G \to H$. Specifically, if $G$ has a totally symmetric set of cardinality $n$ and $H$ does not, then any map $G\to H$ must collapse that totally symmetric set to a single element. 

In the case of braid groups, we can do even better. If $H$ is some group without a totally symmetric set of cardinality 3, any map $f:B_6 \to H$ must send $X_6 = \{ \s_1, \s_3, \s_5 \}$ to a single element, meaning $\s_1\s_3\inv \in \text{ker}(f)$. Because $\s_1\s_3\inv$ normally generates $B_n'$ (for $n \geq 5$), $f$ must factor through the abelianization $B_6/B_6'$ and is therefore cyclic. Thus any non-cyclic quotient of $B_6$ must contain a totally symmetric set of cardinality 3. Variants of this fact will be used several times throughout this paper.

In light of this, it is useful be able to disprove the existence of totally symmetric sets of a certain cardinality in some group. We have two results of this type. The first is due to Chen, Kordek, and Margalit but first appeared in \cite{finite_quotients}.

\begin{lemma}[Chen--Kordek--Margalit]
\label{TSS_criterion}
Let $n \geq 1$ and suppose that $S$ is a totally symmetric subset of a group $G$ with $|S| = n$. If the elements of $S$ have finite order, then $|G| \geq 2^{n-1}n!\:$.

\end{lemma}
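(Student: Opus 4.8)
\emph{Sketch of a proof.} The plan is to split $|G|$ into a ``symmetry factor'' of size $n!$ coming from the total symmetry and a ``size factor'' coming from the subgroup generated by $S$. Write $S = \{g_1,\dots,g_n\}$ and set $A = \langle S \rangle$. Since the $g_i$ pairwise commute and have finite order, $A$ is a finite abelian group. The setwise stabilizer $\text{Stab}(S)$ normalizes $A$, its pointwise stabilizer is $C_G(S) \supseteq A$, and by total symmetry $\text{Stab}(S) \to \text{Sym}(S) \cong S_n$ is surjective with kernel $C_G(S)$; hence $[\text{Stab}(S):C_G(S)] = n!$ and
$$|G| \;\ge\; |\text{Stab}(S)| \;=\; n!\cdot |C_G(S)| \;\ge\; n!\cdot |A|.$$
So the whole problem reduces to showing $|A| \ge 2^{n-1}$.

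To get at $|A|$, I would use that the conjugation action of $\text{Stab}(S)$ on $A$ kills $C_G(S)$ and so descends to a homomorphism $\rho\colon S_n \to \text{Aut}(A)$ with $\rho(\sigma)(g_i) = g_{\sigma(i)}$; reducing mod a prime $p$, this makes $V_p := A/pA$ an $\mathbb{F}_p[S_n]$-module on which $S_n$ permutes the spanning vectors $\bar g_1,\dots,\bar g_n$. The key point I would aim to establish is a rigidity dichotomy: \emph{for every prime $p$, either $\dim_{\mathbb{F}_p} V_p \le 1$ or $\dim_{\mathbb{F}_p} V_p \ge n-1$.} To prove it, assume $\dim V_p \ge 2$. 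First, the $\bar g_i$ are then pairwise distinct, by the same propagation argument as in the proof of the Kordek--Margalit lemma above (if $\bar g_1 = \bar g_2$, conjugating by an element realizing a permutation that fixes $1$ and sends $2$ to $i$ gives $\bar g_1 = \bar g_i$ for all $i$, so $V_p$ is a line). Next, I would look at the relation module $M_p = \ker(\mathbb{F}_p^n \twoheadrightarrow V_p,\; e_i \mapsto \bar g_i)$, an $S_n$-submodule of the permutation module $\mathbb{F}_p^n$: for any $0 \ne c \in M_p$ and any transposition $(j\,k)$ one has $(c_j - c_k)(e_j - e_k) = c - (j\,k)\cdot c \in M_p$, and since $e_j - e_k \notin M_p$ (distinctness of the $\bar g_i$) this forces $c_j = c_k$ for all $j,k$; thus $M_p \subseteq \mathbb{F}_p \mathbf 1$, so $\dim M_p \le 1$ and $\dim V_p \ge n-1$.

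Granting the dichotomy, I would finish as follows. For $n = 1$ the bound is trivial; for $n = 2$ it says $|A| \ge 2$, which holds because the $g_i$ are conjugate and distinct, so $g_1 \ne 1$. For $n \ge 3$ I would first rule out $A$ being cyclic: if it were, $\text{Aut}(A)$ would be abelian, so all transpositions would act through a single $\tau$ with $\tau^2 = \text{id}$; since $n \ge 3$ each $g_i$ is fixed by the action of some transposition, so $\tau$ fixes all of $S$, forcing $\tau = \text{id}$, $\rho$ trivial, and $g_1 = g_2$ --- a contradiction. So $A$ is a non-cyclic finite abelian group, hence $\dim_{\mathbb{F}_p}(A/pA) \ge 2$ for some prime $p$, and the dichotomy upgrades this to $\dim_{\mathbb{F}_p} V_p \ge n-1$, giving $|A| \ge |V_p| = p^{\dim V_p} \ge 2^{n-1}$.

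The step I expect to be the main obstacle is the rigidity dichotomy (and, closely related, excluding the cyclic case): the argument lives or dies on the fact that a proper $S_n$-submodule of the permutation module $\mathbb{F}_p^n$ containing no coordinate difference $e_i - e_j$ must sit inside the fixed line $\mathbb{F}_p\mathbf 1$, together with the observation that the automorphism group of a cyclic group is too abelian to support a ``genuine'' permutation action of $S_n$ on $n$ distinct elements. Everything else --- the reduction to $|A| \ge 2^{n-1}$ and the final numerics --- is bookkeeping.
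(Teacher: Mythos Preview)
Your argument is correct, and its skeleton matches the paper's: both reduce to bounding $|A|$ for $A=\langle S\rangle$ by observing that $A$ lies in the kernel of the surjection $\mathrm{Stab}(S)\to\mathrm{Sym}(S)\cong S_n$, so that $|G|\ge|\mathrm{Stab}(S)|\ge n!\,|A|$. Where you diverge is in the proof that $|A|\ge 2^{n-1}$. The paper (deferring to \cite{finite_quotients}) isolates the minimal \emph{integer} $p$ with $t_i^{\,p}=t_j^{\,p}$ for all $i,j$ and shows directly that $|A|\ge p^{n-1}$ (Lemma~\ref{prop_23}); distinctness of the $t_i$ gives $p\ge 2$. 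You instead pass to $V_q=A/qA$ for a well-chosen \emph{prime} $q$, view it as an $\mathbb F_q[S_n]$-quotient of the permutation module, and use the submodule structure of $\mathbb F_q^{\,n}$ to force $\dim V_q\ge n-1$ once you have ruled out $A$ cyclic. Your route is self-contained and pleasantly structural; the paper's route has the practical advantage that its integer $p$ is exactly the parameter later sharpened in Section~\ref{sectionOnBoundTheorem} (where one shows $p=m\ge 3$ for a minimal non-cyclic quotient of $B_n$) to obtain Theorem~\ref{thmBound}, a refinement that is less immediate from your prime-at-a-time analysis.
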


A full proof of this lemma can be found in \cite{finite_quotients}, and will be partially discussed in section \ref{sectionOnBoundTheorem}.

\begin{lemma}
\label{TSS_in_linear}
If $\mathbb{K}$ is a field and $S \subset GL_2(\mathbb{K})$ is a totally symmetric set, then $|S| \leq 2$
\end{lemma}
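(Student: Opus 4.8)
The plan is to show that a totally symmetric set $S \subset GL_2(\mathbb{K})$ of cardinality $\geq 3$ cannot exist by analyzing the constraints that pairwise commutativity plus total symmetry impose on $2 \times 2$ matrices. Suppose for contradiction that $S = \{A_1, A_2, A_3, \ldots\}$ is totally symmetric with $|S| \geq 3$; I will only need three distinct, pairwise-commuting elements $A_1, A_2, A_3$ that are simultaneously conjugate via elements of $GL_2(\mathbb{K})$. First I would reduce to the algebraically closed case: replacing $\mathbb{K}$ by its algebraic closure $\overline{\mathbb{K}}$ only enlarges the group, and a totally symmetric set in $GL_2(\mathbb{K})$ remains totally symmetric in $GL_2(\overline{\mathbb{K}})$, so it suffices to rule out $|S| \geq 3$ there.

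Over $\overline{\mathbb{K}}$, commuting matrices can be simultaneously put into a common form. The key dichotomy is whether some $A_i$ is a scalar matrix or has a repeated eigenvalue. If some $A_i = \lambda I$ is scalar, then since all elements of $S$ are conjugate to $A_i$ (by total symmetry, even just transpositions), every element of $S$ equals $\lambda I$, contradicting that the $A_i$ are distinct — recall from Lemma~1 in the excerpt that a totally symmetric set has cardinality $1$ or $|S|$, so distinctness is genuine. Hence no $A_i$ is scalar. Next, if some $A_i$ is non-scalar but has a single repeated eigenvalue $\lambda$ (a non-trivial Jordan block), then any matrix commuting with it is a polynomial in $A_i$, hence upper-triangular in the basis adapted to $A_i$ with the same eigenvalue $\lambda$ on the diagonal; but all $A_j$ have the same characteristic polynomial as $A_i$, so each $A_j$ also has repeated eigenvalue $\lambda$ and is of the form $\lambda I + N_j$ with $N_j$ nilpotent. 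One then checks that the map sending $\lambda I + N$ to the off-diagonal entry realizes $S$ inside the additive group $\overline{\mathbb{K}}$, on which $GL_2$ acts by conjugation through scaling — and scaling cannot realize a transposition of two nonzero elements while fixing a third unless they coincide; this forces $|S| \leq 2$.

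The remaining case is that every $A_i$ is diagonalizable with two distinct eigenvalues. Commuting diagonalizable matrices are simultaneously diagonalizable, so after a global conjugation $A_1, A_2, A_3$ are all diagonal. A diagonal matrix in $GL_2$ with distinct eigenvalues has exactly two matrices conjugate to it that are diagonal: itself and the one with swapped entries. So the conjugacy class of $A_1$ meets the diagonal torus in at most two points, which means $\{A_1, A_2, A_3\}$ — all conjugate, all diagonal — has at most two distinct elements, contradicting $|S| \geq 3$. Assembling the three cases gives $|S| \leq 2$.

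I expect the main obstacle to be the repeated-eigenvalue (single Jordan block) case: one has to argue carefully that the centralizer of a non-trivial Jordan block is small enough and that the conjugation action of $GL_2(\overline{\mathbb{K}})$ on this centralizer is too restrictive to realize $S_k$ for $k \geq 3$ — essentially that the normalizer of a one-parameter unipotent-type subgroup acts on it only through a torus, which cannot permute three distinct nonzero vectors arbitrarily. The diagonalizable case and the scalar case are then quick. An alternative uniform phrasing, which might be cleaner to write, is: for any non-scalar $A \in GL_2(\overline{\mathbb{K}})$ the centralizer $Z(A)$ is a maximal abelian subgroup of dimension $1$ (plus the scalars), and the number of $GL_2$-conjugates of $A$ lying in a fixed $Z(A)$ is at most $2$ in every case; since a totally symmetric set of size $k$ forces $k$ mutually conjugate elements inside a single such centralizer, we get $k \leq 2$.
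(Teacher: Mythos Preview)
Your proposal is correct and follows essentially the same approach as the paper: pass to the algebraic closure, split according to whether the elements are diagonalizable, use simultaneous diagonalization and the fact that at most two diagonal matrices share a given multiset of eigenvalues in the diagonalizable case, and in the non-diagonalizable case put everything in the form $\lambda I + N$ and observe that conjugation acts on the off-diagonal entry by a scalar, which cannot swap two values while fixing a third. Your anticipated ``main obstacle''---that the conjugating element must be upper triangular so the action is just scaling---is exactly the computation the paper carries out explicitly.
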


The proof of this lemma is long and computational, so we present it in the Appendix.

It is not known if there are any totally symmetric sets of cardinality $n$ in $GL_{n-1}(\mathbb{K})$, nor is it known whether such a set cannot exist. This knowledge would be useful in classifying homomorphisms from braid groups.

\section{Proof of Theorems \ref{A_n_group}, \ref{B_n_numerical}, and \ref{B_n_group}}

\noindent
For each $n \in \{5,6,7,8\}$, A basic outline for the proof of theorems \ref{A_n_group}, \ref{B_n_numerical}, and \ref{B_n_group} is as follows:

\begin{enumerate}
    \item Use the classification of finite simple groups to show that the smallest non-trivial quotient of $B_n'$ (which must be simple) is $A_n$.
    \item Show that the smallest non-cyclic quotient of $B_n$ has order $n!$.
    \item Show that $S_n$ is the only non-cyclic quotient of $B_n$ with order $n!$.
\end{enumerate}

\vspace{.2cm}

Due to our reliance on the classification of finite simple groups (for a full statement, see \cite{CFSG}), we will give a list of all simple non-abelian groups with order at most $8!/2$. 
\vspace{.2cm}

\begin{center}
\setlength{\tabcolsep}{10pt} 
\renewcommand{\arraystretch}{1.5}
\begin{tabular}{|c c | c c | c c|}

\hline
     60 & $A_5$                & 2520 & $A_7$       & 7920 & $M_{11}$ \\
     168 & PSL(2,7)           & 3420 & PSL(2,19) & 9828 & PSL(2,27) \\
     360 & $A_6 \cong \text{PSL}(2,9)$ & 4040 & PSL(2,16) & 12180 & PSL(2,29) \\
     504 & PSL(2,8)           & 5616 & PSL(3,3)  & 14880 & PSL(2,31) \\
     660 & PSL(2,11)          & 6048 & $G_2(2)'$   & 20160 & $A_8$ \\
     1096 & PSL(2,13)         & 6072 & PSL(2,23) & 20160 & PSL(3,4) \\
     2448 & PSL(2,17)         & 7800 & PSL(2,25) &&\\
    \hline
\end{tabular}
\end{center}

\vspace{.3cm}

\noindent
The $n=5$ case requires no additional machinery or computer assistance, nor does it use totally symmetric sets.

\vspace{.3cm}

\noindent
\textbf{Proof of theorems \ref{A_n_group}, \ref{B_n_numerical}, and \ref{B_n_group} for $n=5$}.

\begin{proof}

Being the smallest simple non-abelian group, $A_5$ is the smallest non-trivial quotient of $B_5'$.

It remains to prove $S_5$ is the smallest non-cyclic quotient of $B_5$. Let $f: B_5 \to G$ be a non-cyclic surjection. Because $f$ is non-cyclic, $f(B_5')$ is non-trivial. If $f(B_5')$ were not simple it would have a simple non-abelian quotient ($B_5'$ is perfect). Then, $f(B_5')$ must have order at least twice that of the smallest simple non-abelian group, ie at least $120 = 5!$, so $f(B_5)$ must also have order at least $120$. 

If $f(B_5')$ is simple, it can either have order larger than $120$, or be $A_5$. In the latter case, it must be of index at least 2, lest we have a map $B_5 \to S_5$ not conjugate to the canonical map. This would contradict a result by Artin \cite{Artin} which says for $n \not \in \{4,6\}$) any homomorphism $B_n \to S_n$ with transitive image must be cyclic or conjugate to the standard projection. Then $Im(f)$ contains $A_5$ as a subgroup of index at least 2, and therefore has order at least $120$. 

Because $B_5'$ is perfect, any non-cyclic quotient of $B_5$ must be non-solvable, so we will turn our attention to non-solvable groups of order 120. There are 3 such groups: 1. $S_5$, 2. $SL(2,5)$ and 3. $A_5 \times \mathbb{Z}_2$. Both $SL(2,5)$ and $A_5 \times \mathbb{Z}_2$ have further quotients to $A_5$, which would give a map $B_5 \to A_5$. This contradicts the same result by Artin. Thus, $S_5$ is the only quotient of $B_5$ with order $120$. 

\end{proof}

Given our reliance on the classification of finite simple groups, it will be useful to show certain homomorphisms from braid groups to simple groups do not exist. This will reduce the number of computer checks required.  

\begin{lemma}
\label{big_PSL}
Let $p \neq 2,3$ be prime, $r \in \mathbb{N}$. For $n \geq 6$, and $q = p^r$ there are no non-cyclic maps $B_n \to PSL(2, q)$. For $n \geq 8$, there are no non-trivial maps $B_n' \to PSL(2,q)$.
\end{lemma}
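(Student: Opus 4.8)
The key observation is that $PSL(2,q)$ for $q = p^r$ with $p \neq 2,3$ contains no totally symmetric set of cardinality $3$. Indeed, $PSL(2,q)$ is a quotient of $SL(2,q) \subseteq GL_2(\mathbb{F}_q)$, and by Lemma \ref{TSS_in_linear} any totally symmetric set in $GL_2(\mathbb{F}_q)$ has cardinality at most $2$; by Lemma 1, the image of a totally symmetric set under the quotient $SL(2,q) \to PSL(2,q)$ is again totally symmetric of cardinality $1$ or equal to the original, so $PSL(2,q)$ admits no totally symmetric set of size $3$ either.

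\textbf{Proof of Lemma \ref{big_PSL}.}

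\begin{proof}
Since $SL(2,q) \leq GL_2(\mathbb{F}_q)$, Lemma \ref{TSS_in_linear} shows every totally symmetric subset of $SL(2,q)$ has cardinality at most $2$. The quotient map $SL(2,q) \to PSL(2,q)$ sends totally symmetric sets to totally symmetric sets of cardinality $1$ or the same cardinality (Lemma 1), so $PSL(2,q)$ contains no totally symmetric set of cardinality $3$.

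Now suppose $f : B_n \to PSL(2,q)$ is a homomorphism with $n \geq 6$. The set $X_6 = \{\s_1, \s_3, \s_5\} \subseteq B_6 \subseteq B_n$ is totally symmetric in $B_n$, so by Lemma 1 its image $f(X_6)$ is totally symmetric of cardinality $1$ or $3$. Since $PSL(2,q)$ has no totally symmetric set of cardinality $3$, we must have $|f(X_6)| = 1$, i.e. $f(\s_1 \s_3\inv) = 1$. As $\s_1 \s_3\inv$ normally generates $B_n'$ for $n \geq 5$, the whole commutator subgroup $B_n'$ lies in $\ker f$, so $f$ factors through the abelianization $B_n/B_n' \cong \Z$ and is therefore cyclic. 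This proves the first claim.

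For the second claim, suppose $f : B_n' \to PSL(2,q)$ with $n \geq 8$. The set $X_n' = \{\s_{2i-1}\s_1\inv\}_{i=2}^{\lfloor n/2 \rfloor}$ is totally symmetric in $B_n'$ and has cardinality $\lfloor n/2 \rfloor - 1 \geq 3$ when $n \geq 8$. In particular $X_8'$ has cardinality $3$, and for $n \geq 8$ we may take the totally symmetric subset $\{\s_3\s_1\inv, \s_5\s_1\inv, \s_7\s_1\inv\} \subseteq B_n'$ of cardinality $3$. By Lemma 1 its image under $f$ is totally symmetric of cardinality $1$ or $3$; since $PSL(2,q)$ has none of cardinality $3$, the image is a single element, so $f(\s_3\s_1\inv) = f(\s_5\s_1\inv)$, which forces $f(\s_3\s_5\inv) = 1$. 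Since $\s_3\s_5\inv$ is conjugate in $B_n'$ to a normal generator of $B_n'$ (equivalently, the normal closure of $\s_3\s_5\inv$ in $B_n'$ is all of $B_n'$ for $n \geq 5$; compare \cite{finite_quotients}), $f$ is trivial.
\end{proof}

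Wait -- I should double-check the normal generation claim used in the second paragraph, since the excerpt only explicitly states that $\s_1\s_3\inv$ normally generates $B_n'$ in $B_n$ (not in $B_n'$), and for $B_n'$ one needs that $\s_3\s_5\inv$ normally generates $B_n'$ within $B_n'$ itself; this is the step I expect to be the main technical obstacle, and it is the kind of fact that the paper establishes when proving $X_n'$ is totally symmetric and analyzing $B_n'$ in \cite{finite_quotients}, so I would either cite it or supply a short argument reducing to the known braid relations. The rest of the argument is a direct application of the totally-symmetric-set machinery already set up.
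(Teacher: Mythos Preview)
Your argument contains a genuine gap: Lemma~1 (Kordek--Margalit) is being applied in the wrong direction. That lemma says that if $S\subset G$ is totally symmetric and $f:G\to H$ is a homomorphism, then $f(S)$ is totally symmetric in $H$. It does \emph{not} say that every totally symmetric set in $H$ is the image of one in $G$. So from ``$SL(2,q)$ has no totally symmetric set of cardinality~3'' you cannot deduce the same for $PSL(2,q)$; lifts of a totally symmetric set in $PSL(2,q)$ along the double cover need not commute in $SL(2,q)$ (they commute only up to sign), and the conjugating elements need not lift compatibly.

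In fact the conclusion you are aiming for is false: whenever $q^2-1\equiv 0\pmod{16}$ (for instance $q=7$), $PSL(2,q)$ contains a copy of $S_4$, and $\{(1\,2)(3\,4),(1\,3)(2\,4),(1\,4)(2\,3)\}\subset S_4$ is a totally symmetric set of cardinality~3 in $PSL(2,q)$. This also explains why the hypothesis $p\neq 2,3$ goes unused in your argument---a warning sign, since the paper needs it.

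The paper's proof takes a different route. It does not claim $PSL(2,q)$ lacks totally symmetric triples; instead it analyses the stabilizer of any such triple, which must surject onto $S_3$. Running through Dickson's list of subgroups of $PSL(2,q)$, the only candidate stabilizer (for $p\neq 2,3$) is $S_4$, and the unique totally symmetric triple stabilized by $S_4$ is the Klein-four set above. The paper then checks directly that this triple is incompatible with the braid relations, so it cannot be the image of $X_6$ or $X_8'$. Your Lemma~\ref{TSS_in_linear} shortcut does not substitute for this case analysis.
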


\begin{proof}
Both $X_6 \subset B_n$ and $X_8' \subset B_n'$ are totally symmetric sets of cardinality 3 for $n \geq 6$ and $n \geq 8$ respectively. As discussed in section \ref{TSS_section}, if $X_n$ or $X_n'$ are collapsed to a single element, the homomorphism in question must be cyclic or trivial respectively. To disprove the existence of non-cyclic and non-trivial maps to $PSL(2,q)$, we will show that $PSL(2,q)$ has no suitable totally symmetric sets of cardinality 3 which can be the images of $X_6$ or $X_8'$.

If such a totally symmetric set $S \subset PSL(2,q)$ existed, its stabilizer would be a subgroup of $PSL(2,q)$ which surjects onto $S_3$. The subgroups of $PSL(2,q)$ are listed below \cite{PSL_subgroups}.

\begin{enumerate}
    \item Elementary-abelian $p$-groups
    \item Cyclic groups of order $z$, where $z$ divides $(q \pm 1)/k$ with $k = \text{gcd}(q-1,2)$
    \item Dihedral subgroups of order $2z$ with $z$ as above
    \item Alternating groups $A_4$ if\footnote{listed for completeness, $p \neq 2$ in our case.} $p > 2$ or $p=2$ and $r \equiv 0 \text{ (mod 2)}$
    \item Symmetric groups $S_4$ if $q^2 - 1 \equiv 0 \text{ (mod 16)}$
    \item Alternating groups $A_5$ if $p=5$ or $q^2 - 1 \equiv 0 \text{ (mod 5)}$
    \item Semidirect products $C_p^m \rtimes C_t$ of elementary-abelian groups of order $p^m$ with cyclic groups of order $t$, where $t$ divides $p^m-1$ as well as $(q-1)/k$
    \item Groups $PSL(2,p^m)$ if $m \mid r$ and $PGL(2,p^m)$ if $2m \mid r$.
\end{enumerate}

\noindent
We will show that no subgroups listed can be the stabilizer of the image of $X_6$ or $X_8'$

It is simple to show that there are no totally symmetric sets of cardinality 3 in an abelian or dihedral group. The alternating group $A_4$ has order $12 < 2^{3-1} \cdot 3!$, and therefore cannot contain a totally symmetric set by lemma \ref{TSS_criterion}. The groups $A_5$ and $PSL(2,p^m)$ are both simple and therefore can not map to $S_3$ non-trivially, while $PGL(2,p^m)$ contains $PSL(2,p^m)$ as a subgroup of index 2, so it cannot surject onto $S_3$. This deals with cases 1, 2, 3, 4, 6, and 8. 

For case 7, note that every element of $C_p^m \triangleleft (C_p^m \rtimes C_t$) has prime order not dividing $6$, so any homomorphism $C_p^m \rtimes C_t$ factors through $C_t$ and therefore cannot be surjective.

For case 5, we first find all totally symmetric sets $T \subset S_4$ of cardinality 3 for which $S_4$ is the stabilizer. We have the natural (surjective) map $\text{Stab}(T) \cong S_4 \to S_3$ which must have kernel $\{e, (1 \: 2)(3 \: 4), (1\:3)(2 \: 4), (1 \: 4)(2 \: 3)\}$. The elements of $T$ pairwise commute, and therefore must be in this kernel (elements of $T$ act trivially on $T$ by conjugation). Finally, the elements of $T$ are in the same conjugacy class, so $T = \{(1 \: 2)(3 \: 4), (1\:3)(2 \: 4), (1 \: 4)(2 \: 3)\}$.

However, $T$ cannot be the image of $X_6$ or $X_8'$, as it is incompatible with the braid relation. Let $c_i$ be the image of $\s_i$ in the case of $B_n \to PSL(2,q)$ and the image of $\s_{i+2} \s_1\inv$ in the case of $B_n' \to PSL(2,q)$. Assume $\{c_1,c_3,c_5\} = T$. Then $c_ic_j = c_k$ for any distinct $i,j,k \in \{1,3,5\}$ and  

\begin{equation}
\begin{split}
    c_2 (c_3c_5) c_2 &= c_2 c_1 c_2 \\
    &= c_1 c_2 c_1 \\
    &= (c_3c_5) c_2 (c_3c_5) \\ 
    &= c_3 c_2 c_3 (c_5 c_5) \\ 
    &= c_2 c_3 c_2 (c_5 c_5)
\end{split}
\end{equation}

\noindent
so $c_2 c_3 c_2 c_5 = c_2 c_3 c_2 c_5 c_5$, meaning $c_5 = 1$, which is impossible.

\end{proof}

The $p = 2,3$ difficulty arises when considering $C_p^m \rtimes C_t$ as a possible stabilizer. This can be worked around in particular cases:

\begin{lemma}
\label{small_PSL}
Let $G = PSL(2,8)$ or $PSL(2,27)$. For $n \geq 6$, there are no non-cyclic maps $B_n \to G$, and for $n \geq 8$, there are no non-trivial maps $B_n' \to G$. 
\end{lemma}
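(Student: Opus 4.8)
The plan is to mimic the structure of the proof of Lemma \ref{big_PSL}, since the only obstruction there was case 7 (the semidirect products $C_p^m \rtimes C_t$) in the excluded primes $p = 2, 3$. So first I would observe that $PSL(2,8)$ and $PSL(2,27)$ each have a totally symmetric set of cardinality $3$ only if some subgroup appearing in the subgroup classification surjects onto $S_3$; all cases except the Borel-type subgroups $C_p^m \rtimes C_t$ are dispatched exactly as in Lemma \ref{big_PSL} (abelian and dihedral subgroups contain no cardinality-$3$ totally symmetric set, $A_4$ is too small by Lemma \ref{TSS_criterion}, simple subgroups and $PGL$-type subgroups cannot surject onto $S_3$, and the case-5 set $T = \{(1\:2)(3\:4), (1\:3)(2\:4), (1\:4)(2\:3)\}$ in $S_4$ is incompatible with the braid relation by the same computation). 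That reduces everything to understanding totally symmetric sets of cardinality $3$ whose stabilizer is of the form $C_p^m \rtimes C_t$ with $p \in \{2,3\}$.

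Next I would pin down exactly which Borel subgroups occur. For $G = PSL(2,8)$ we have $q = 8 = 2^3$, so $|G| = 504$, the point stabilizer in the action on the projective line is $C_2^3 \rtimes C_7$, and its subgroups of the form $C_2^m \rtimes C_t$ have $t \mid 7$; for $G = PSL(2,27)$ we have $q = 27 = 3^3$, $|G| = 9828$, and the Borel is $C_3^3 \rtimes C_{13}$, with $t \mid 13$. In both cases $t$ is coprime to $6$. Now I would argue that such a group cannot be the stabilizer of a cardinality-$3$ totally symmetric set $T$: the stabilizer must surject onto $\mathrm{Sym}(T) \cong S_3$, but a surjection $C_p^m \rtimes C_t \twoheadrightarrow S_3$ would have to kill the normal Sylow $p$-subgroup $C_p^m$ (its image is a normal $p$-subgroup of $S_3$, hence trivial since $p \neq 2, 3$ — wait, here $p$ \emph{is} $2$ or $3$), so this is exactly where the original argument breaks and I need a different idea.

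The key new step: instead of going through the abstract stabilizer, I would use the braid relation directly together with the smallness of $G$. The elements $c_1, c_3, c_5$ (images of $\sigma_1, \sigma_3, \sigma_5$, or of $\sigma_3\sigma_1^{-1}, \sigma_5\sigma_1^{-1}, \sigma_7\sigma_1^{-1}$) form a totally symmetric set of cardinality $3$ in $G$, and by Lemma \ref{TSS_criterion} applied inside $G$ (all elements of $G$ have finite order) such a set forces $|G| \geq 2^{2} \cdot 3! = 24$ — not enough on its own, so I instead use the finer structure: the elements of $T$ lie in a common conjugacy class of $G$, pairwise commute, and are permuted transitively; so they lie in the stabilizer, and one checks case by case against the list which conjugacy classes of $PSL(2,8)$ and $PSL(2,27)$ can contain three pairwise-commuting conjugates. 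In $PSL(2,q)$ with $q$ even every non-identity element is either unipotent (order $2$) or lies in a torus (order dividing $q \pm 1$), and three pairwise-commuting conjugates all lie in a single Sylow subgroup or a single cyclic torus; in either case the subgroup they generate is abelian, so it equals its own centralizer restricted to $T$ and the stabilizer of $T$ contains this abelian subgroup with $S_3$ acting on it — but the normalizer of a maximal torus in $PSL(2,q)$ is dihedral and the normalizer of a Sylow $p$-subgroup is the Borel, and I showed neither surjects onto $S_3$. For the unipotent case in $PSL(2,27)$, three commuting order-$3$ conjugates generate a subgroup of $C_3^3$, still normalized only inside the Borel. This closes the gap.

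I expect the main obstacle to be the unipotent case, i.e.\ handling totally symmetric sets sitting inside the elementary-abelian Sylow $p$-subgroup itself, since there the ``dihedral normalizer'' shortcut is unavailable and one must genuinely use that the full normalizer is the solvable Borel $C_p^m \rtimes C_t$ with $t$ coprime to $6$, hence admits no surjection to $S_3$ — so really the content is: \emph{any} subgroup of $G$ containing a cardinality-$3$ totally symmetric set of $G$ must, after the case analysis, be contained in either a dihedral group, a Borel, a copy of $A_4$ or $S_4$, or a simple subgroup, and we have eliminated all of these (the $S_4$ case again via the braid relation). Assembling these observations gives the lemma; the write-up is essentially bookkeeping on the two explicit groups once the structural claim about where $T$ must live is established.
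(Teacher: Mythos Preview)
Your proposal is correct in the end, but you walk right past the one-line argument and then take a long detour to arrive back at it. In your second paragraph you already record that for $PSL(2,8)$ the case-7 subgroups are $C_2^m \rtimes C_t$ with $t \mid 7$, and for $PSL(2,27)$ they are $C_3^m \rtimes C_t$ with $t \mid 13$. That is all you need: in the first case $|C_2^m \rtimes C_t| = 2^m t$ is coprime to $3$, and in the second $|C_3^m \rtimes C_t| = 3^m t$ is coprime to $2$, so neither order is a multiple of $6$ and there is no surjection to $S_3$. This is exactly the paper's proof, and it is two sentences long.

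Instead you abandon this after your parenthetical ``wait, here $p$ \emph{is} $2$ or $3$'' and launch into an analysis of where three pairwise-commuting conjugates can sit inside $PSL(2,q)$, tori versus unipotents, normalizers, and so on. That analysis is not wrong in spirit, but several of its steps are asserted rather than proved (e.g.\ that three pairwise-commuting conjugates must lie in a single torus or a single Sylow subgroup), and it is entirely unnecessary: you are only trying to dispose of one remaining subgroup type from the classification, and the order obstruction does it immediately. Your final line, ``the solvable Borel $C_p^m \rtimes C_t$ with $t$ coprime to $6$, hence admits no surjection to $S_3$,'' \emph{is} the paper's argument; you simply could have said it in paragraph two and stopped.
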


\begin{proof}
If $G = PSL(2,8)$, the semi-direct product $C_p^m \rtimes C_t$ must have $t$ dividing $7$ (by the criteria in case 7). Then $|C_p^m \rtimes C_t|$ is not a multiple of $6$, and therefore cannot surject to $S_3$. Similarly, for $G = PSL(2,27)$, we know $t \mid 13$. 
\end{proof}

The only group of the form $PSL(2,q)$ we consider which is not covered by the two preceding lemmas is $PSL(2,16)$, which we will check with a computer. That we require $n\geq 8$ for homomorphisms $B_n' \to PSL(2,q)$ is a notable shortcoming which will also be patched with the help of computers. Fortunately, we do not need to deal with these cases after proving theorem \ref{A_n_group}.

\vspace{.3cm}

\noindent
\textbf{Proof of theorems \ref{A_n_group}, \ref{B_n_numerical}, and \ref{B_n_group} for $n=6$}.

We remind the reader of the proof strategy for theorems \ref{A_n_group}, \ref{B_n_numerical}, and \ref{B_n_group}, namely to use the classification of finite simple groups to find the smallest non-trivial quotient of $B_n'$, then bound the order of a non-cyclic quotient of $B_n$ by $n!$, and finally show that the symmetric group is the only non-cyclic quotient of order $n!$.

\begin{proof}
We require computer assistance to disprove the existence of three homomorphisms: 1. a non-trivial map $B_6' \to PSL(2,7)$ 2. a non-cyclic map $B_6 \to PGL(2,9)$ and 3. a non-cyclic map $B_6 \to M_{10}$ (the Mathieu group of degree 10). We will proceed assuming these homomorphisms do not exist. 

There are only two simple non-abelian groups of order smaller than $|A_6|$, namely $PSL(2,7)$ and $A_5$. The first has been ruled out by a computer check, and the second would give a non-trivial map $B_6' \to S_5$, contradicting a result by Lin \cite{Lin}. Then $A_6$ is the smallest non-trivial quotient of $B_6'$.

Let $f: B_6 \to G$ be non-cyclic. If $f(B_6')$ is not simple, it has a simple non-abelian quotient, which we have just seen cannot be $PSL(2,7)$ or $A_5$. Then $f(B_6')$ has a proper quotient of order $\geq 360$, so $|G| \geq 720$. If $f(B_6')$ is simple and has order less than  $720$, it must be one of $PSL(2,8)$, $PSL(2,11)$ or $A_6$. Both $PSL(2,8)$ and $PSL(2,11)$ cannot be the images of a map from $B_6$ (lemmas \ref{big_PSL}, \ref{small_PSL}), and therefore have index at least $2$ in $G$, meaning $|G| \geq 720$. Similarly, a non-cyclic map $B_6 \to A_6$ is impossible by Artin\footnote{We are careful to note that $n=6$ an exceptional case in this result. Artin described all exceptional homomorphisms, none of which include $A_6$ as an image. } \cite{Artin}. Thus, $|G| \geq 720$.

As before, we note that any non-cyclic quotient of $B_6$ must be non-solvable. There are 5 such groups of order $720$: 1. $S_6$, 2. $A_6 \times \Z_2$, 3. $SL(2,9)$, 4. $PGL(2,9)$ and 5. $M_{10}$. We rule out 4. and 5. with the help of computers, while 2. has a further quotient to $A_6$, which we have already discussed to be impossible. We may disqualify 3. since linear groups cannot have totally symmetric sets of cardinality 3 (lemma \ref{TSS_in_linear}).  Thus, $S_6$ is the smallest quotient of $B_6$.

\end{proof}

\noindent
\textbf{Proof of theorems \ref{A_n_group}, \ref{B_n_numerical}, and \ref{B_n_group} for $n=7$ and $n=8$}.

\begin{proof}
Seven is the largest value of $n$ not covered by lemma \ref{big_PSL}, and as such will require more extensive computer assistance. We check that $PSL(2,8)$, $PSL(2,11)$, $PSL(2,13)$, $PSL(2,17)$ and $PSL(2,16)$ are not the co-domain of any non-trivial map from $B_7'$.

From before, every map $B_6' \to PSL(2,7)$ is trivial, and therefore collapses $X_6'$ to a single element. Then every map $B_7' \to PSL(2,7)$ also collapses $X_6'$, and is therefore trivial. This deals with the $PSL(2,7)$ case. A result from Lin \cite{Lin} tells us there can be no non-trivial maps $B_7' \to A_k$ for $k=5,6$. Then $B_7'$ cannot map non-trivially to any simple group of order less than $7!/2$, and therefore $A_7$ is the smallest non-trivial quotient of $B_7'$.

Let $f:B_7 \to G$ be a non-cyclic surjection. Since $B_7'$ cannot map non-trivially to any simple group of order less than $7!/2$, the only way for $G$ to have order less than $7!$ is if $f(B_7') = G$. We may use lemma \ref{big_PSL} to conclude that if $f(B_7') = PSL(2,19)$, we must have $[G:f(B_7')] \geq 2$ and therefore $|G| > 7!$, while we know $f(B_7') = PSL(2,16)$ is impossible from our computer check. If $f(B_n')$ is not simple, we may use index arguments similar to those used in the $n=5,6$ cases. Then, having exhausted all possibilities, either $f(B_7') \cong A_7$ or $|G| > 7!$. We cannot have $G = A_7 $ by Artin, so $|G| \geq 7!$.

\end{proof}

Now for the $n=8$ case.

\begin{proof}
Lemmas \ref{big_PSL}, and \ref{small_PSL} deal with all homomorphisms $B_n' \to PSL(2,q)$, where $|PSL(2,q)| < 8!/2$ and $q \neq 16$. For the $q=16$ case, note that any homomorphism $B_7' \to PSL(2,16)$ must collapse $X_7'$, and therefore any map $B_8' \to PSL(2,q)$ must also collapse $X_7'$, meaning the map is trivial. Alternating groups $A_n$ for $n = 5,6,7$ are dealt with by a result from Lin \cite{Lin}.

We check the remaining groups, $PSL(3,3)$, $G_2(2)'$, $M_{11}$, and $PSL(3,4)$ with a computer. 

\end{proof}

\section{Proof of Theorem \ref{thmBound}}
\label{sectionOnBoundTheorem}

Theorem \ref{thmBound} is a refinement of the main theorem in \cite{finite_quotients}, which states that if $B_n\to G$ is non-cyclic and $n \geq 5$, then $|G| \geq 2^{\lfloor n/2 \rfloor -1}\lfloor n/2 \rfloor!$. We will present a brief summary of their proof so that we may explain certain modifications.

\vspace{.3cm}

Nominally, the proof given in \cite{finite_quotients} is a simple application of lemma \ref{TSS_criterion}---if $B_n \to G$ is non-cyclic, $G$ must contain a totally symmetric set of cardinality $\lfloor n/2 \rfloor$, and therefore $|G| \geq 2^{\lfloor n/2 \rfloor -1}\lfloor n/2 \rfloor!$. The real work in proving lemma \ref{TSS_criterion} is done by the following proposition:

\begin{lemma}
\label{prop_23}
Let $n \geq 1$, $G$ be a group, and $T \subset G$ be a totally symmetric subset with with $|T| = n$. Suppose that each element of $T$ has finite order, and let $p$ be the minimal integer such that $t_1^p = t_2^p$ for all $t_1,t_2 \in T$. Then $\langle T \rangle$ is a finite group whose order is greater than or equal to $p^{n-1}$.
\end{lemma}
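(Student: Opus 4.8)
The plan is to analyze the structure of $\langle T \rangle$ by exploiting the total symmetry together with the commutativity of $T$. Write $T = \{t_1, \ldots, t_n\}$. Since the $t_i$ pairwise commute and each has finite order, the subgroup $A = \langle t_1, \ldots, t_n \rangle$ generated by $T$ is a finitely generated abelian group of finite exponent, hence \emph{finite}. So the finiteness assertion is immediate; the content is the lower bound $|A| \geq p^{n-1}$.

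First I would show that $p$ is exactly the order of the element $t_1 t_i^{-1}$ for any $i \neq 1$ — more precisely that the "difference elements'' $d_i := t_1 t_i^{-1}$ (for $i = 2, \ldots, n$) all have order dividing $p$, with $p$ minimal with this property. Indeed $t_1^p = t_i^p$ says $d_i^p = e$ (using commutativity), and minimality of $p$ is exactly the hypothesis. Next, the key step: I claim the subgroup $D = \langle d_2, \ldots, d_n\rangle \leq A$ is isomorphic to $(\mathbb{Z}/p)^{n-1}$, i.e. the $d_i$ are "independent'' of order exactly $p$. To see order exactly $p$: by total symmetry there is $h \in G$ conjugating $t_1 \mapsto t_1$, $t_i \mapsto t_j$ for a third index $j$; this sends $d_i \mapsto d_j$, so all $d_i$ are conjugate in $G$ and hence have a common order $p'$ dividing $p$; if $p' < p$ then $t_1^{p'} = t_i^{p'}$ for all $i$, contradicting minimality, so $p' = p$. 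For independence I would use total symmetry again: a relation $\prod d_i^{a_i} = e$ with not all $a_i \equiv 0 \pmod p$ can be transported by a permutation of $T$ to a relation among a different subset, and by averaging/symmetrizing over $S_n$ (or over transpositions) one forces all exponents equal, reducing to a single relation of the form $(d_2 d_3 \cdots d_n)^{a} = e$, which combined with $\prod (t_1 t_i^{-1}) = t_1^{n-1} t_2^{-1}\cdots t_n^{-1}$ and a careful exponent count pins down $a \equiv 0$.

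I expect the independence argument to be the main obstacle: getting from "all $d_i$ conjugate, all of order $p$'' to "they generate a free $(\mathbb{Z}/p)$-module of rank $n-1$'' requires using the full strength of the $S_n$-action, not just that single transpositions act. The clean way is probably to consider $A$ as a $\mathbb{Z}/\exp(A)$-module, observe that the subspace $V = \langle d_2, \ldots, d_n \rangle$ is an $S_n$-submodule on which $S_n$ acts (via the stabilizer surjection) by permuting the $t_i$, i.e. $V$ is a quotient of the standard permutation-module relation space, and show the $p$-torsion of $V$ is exactly the standard $(n-1)$-dimensional representation of $S_n$ over $\mathbb{F}_p$, which has no proper nonzero $S_n$-submodule supported on fewer coordinates — forcing $\dim_{\mathbb{F}_p} V \geq n-1$. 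Once $|D| \geq p^{n-1}$ is established, we get $|A| \geq |D| \geq p^{n-1}$, completing the proof. I would close by remarking that equality can occur (e.g. $A \cong (\mathbb{Z}/p)^{n-1}$ or $(\mathbb{Z}/p)^{n-1} \times \mathbb{Z}/p$), which is why the bound is stated with $\geq$.
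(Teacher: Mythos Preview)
The paper does not actually prove this lemma; it cites \cite{finite_quotients} and remarks only that the proof there minimizes over $p$ to get $2^{n-1}$. So the relevant comparison is with that argument, and more to the point with whether your plan works.

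It does not. The central claim---that $D=\langle d_2,\ldots,d_n\rangle\cong(\mathbb{Z}/p)^{n-1}$, or even just $|D|\ge p^{n-1}$---is false. Take $n=3$, set $A'=(\mathbb{Z}/3)^3/\langle(1,1,1)\rangle$, let $T=\{e_1,e_2,e_3\}$ be the images of the standard basis vectors, and put $G=A'\rtimes S_3$. Then $T$ is totally symmetric in $G$, each $d_i=e_1-e_i$ has order $3$, so $p=3$; but $d_2+d_3=2e_1-e_2-e_3=(2,2,2)=0$ in $A'$, whence $D=\langle d_2\rangle\cong\mathbb{Z}/3$ and $|D|=3<9=p^{\,n-1}$. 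The lemma survives because $|\langle T\rangle|=|A'|=9$, but your chain $|A|\ge|D|\ge p^{n-1}$ breaks at the second inequality. Your representation-theoretic sketch does not repair this: when a prime dividing $p$ also divides $n$, the all-ones vector lies in the ``sum-zero'' submodule, so the standard $(n-1)$-dimensional $S_n$-module over that prime field is \emph{not} irreducible, and that extra submodule is exactly what collapses $D$ in the example above.

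The argument that does work bypasses the differences entirely: show directly that the $p^{n-1}$ products $t_2^{a_2}\cdots t_n^{a_n}$ with $0\le a_i<p$ are pairwise distinct in $\langle T\rangle$. Given a relation $\prod_{j\ge 2} t_j^{c_j}=e$, conjugate by an element realizing the transposition $(1\;i)$ and divide the two relations in the abelian group $\langle T\rangle$ to obtain $(t_1 t_i^{-1})^{c_i}=e$; since you already showed $t_1 t_i^{-1}$ has order exactly $p$, this forces $p\mid c_i$ for every $i\ge 2$. That is essentially the proof in \cite{finite_quotients}.
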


The statement of this lemma in \cite{finite_quotients} does not define $p$. Instead its proof minimizes over all such values to obtain the bound $2^{n-1}$. Lemma \ref{TSS_criterion} now follows from lemma \ref{prop_23} by noting that $\langle T \rangle$ must lie in the kernel of the natural map $\text{Stab}(T) \to \text{Sym}(T)$ and therefore $|\text{Stab(T)}| \geq 2^{n-1}n!$.

\vspace{.3cm}

We are interested in the particular case where $G$ is the image of a map $f:B_n \to G$ and $T = f(X_n)$. Let $g_i = f(\s_i)$, so that $T = \{g_{2i-1}\}_{i=1}^{\lfloor n/2\rfloor}$ and $m$ be the order of the $g_i$'s. As in lemma \ref{prop_23}, we let $p$ be the minimal integer so that $g_1^p = g_3^p = \cdots = g_{2\lfloor n/d \rfloor-1}$.

When $m=2$, $G$ must be a non-cyclic quotient of $S_n$ and is therefore isomorphic to $S_n$. Because $n! >3^{\lfloor n/2 \rfloor - 1}\lfloor n/2 \rfloor !$, we will only consider cases where $m \geq 3$. To prove theorem \ref{thmBound}, it suffices to show that $p \geq 3$, after which the argument from \cite{finite_quotients} can be carried through, only replacing every $2^{\lfloor n/2 \rfloor -1}$ with a $3^{\lfloor n/2 \rfloor-1}$.

\begin{lemma}
For $n \geq 5$, if $G$ is the smallest non-cyclic quotient of $B_n$, then $p=m$.
\end{lemma}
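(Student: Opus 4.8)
The plan is to argue by contradiction. Suppose $p < m$, and write $z$ for the common value $g_1^p = g_3^p = \cdots = g_{2\lfloor n/2 \rfloor - 1}^p$. Since $g_1$ has order $m$ and $0 < p < m$, the element $z$ is nontrivial. The key reduction is that it suffices to prove $z \in Z(G)$: a non-cyclic quotient of $B_n$ cannot be abelian, because $B_n/B_n' \cong \Z$ is cyclic, so $G$ is nonabelian; hence if $Z(G) \neq 1$ then $G/Z(G)$ is again a non-cyclic quotient of $B_n$ and $|G/Z(G)| < |G|$, contradicting the minimality of $G$. So everything comes down to showing, under the assumption $p < m$, that $z$ is central.

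For that I would prove the stronger statement that $g_i^p = z$ for every $i \in \{1, \ldots, n-1\}$; granting this, $z = g_i^p$ commutes with $g_i$ for each generator $g_i$, hence $z$ is central. For odd $i$ this is just the definition of $z$ via the totally symmetric set $X_n$. For $3 \le i \le n-1$: since $\s_1$ commutes with each of $\s_3, \ldots, \s_{n-1}$ and all standard generators of the braid subgroup $\langle \s_3, \ldots, \s_{n-1} \rangle$ are conjugate inside it, there is $v \in \langle \s_3, \ldots, \s_{n-1}\rangle$ with $v\s_3 v\inv = \s_i$; then $f(v)$ commutes with $g_1$, hence with $z = g_1^p$, so
\[ g_i^p = f(v)\, g_3^p\, f(v)\inv = f(v)\, z\, f(v)\inv = z. \]
The remaining case is $i = 2$, for which I would use $v = \s_1\s_2$: one checks $v\s_1 v\inv = \s_2$, and $v$ commutes with $\s_4$ (this is where the hypothesis $n \ge 5$ is used, so that $\s_4$ exists). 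Since the previous case gives $g_4^p = z$, the element $f(v)$ commutes with $z$, and therefore $g_2^p = f(v)\, g_1^p\, f(v)\inv = z$. This finishes the verification that $z$ is central, and hence the proof.

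The step I expect to be the main obstacle is precisely propagating the identity $g_i^p = g_j^p$ from the totally symmetric set $X_n$ (which only contains odd-indexed generators) to generators of the opposite parity. The device that resolves it is to use conjugating elements that simultaneously fix a generator already known to have $p$-th power $z$ — so that conjugating $z$ leaves it unchanged — and carry another generator into the desired position; the index-$2$ generator is the one that cannot be reached this way while fixing $\s_1$, so it needs the separate conjugator $\s_1\s_2$, and that is exactly the place where $n \ge 5$ is invoked.
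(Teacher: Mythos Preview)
Your proof is correct and follows essentially the same strategy as the paper: show that $z=g_1^p$ is central (the paper phrases it as $H=\langle g_1^p\rangle$ being normal, but its computation actually gives centrality), then quotient to contradict minimality. The propagation step---using conjugators that fix one generator while moving another---is the same in spirit; you carry it slightly further by proving $g_i^p=z$ for every $i$, whereas the paper only needs $g_4^p=z$ to check that $g_2$ commutes with $z$.

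The one genuine difference is in the ``quotient is non-cyclic'' step. The paper quotients by $H=\langle z\rangle$ and runs an explicit braid computation: assuming $G/H$ cyclic forces $g_1=g_4(g_1^p)^a$, and conjugating by $g_1g_2$ yields $g_1=g_2$. Your route via $G/Z(G)$ is cleaner: since $z\in Z(G)$ is nontrivial and $G$ is nonabelian, the standard fact that $G/Z(G)$ cyclic implies $G$ abelian immediately gives a strictly smaller non-cyclic quotient. This avoids the explicit manipulation and also makes the paper's preliminary observation that $p\mid m$ unnecessary (indeed, the paper does not appear to use it either).
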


\begin{proof}
We first show that $p\mid m$. If $m = pq + r$ with $0\leq r < p$, then $(g_1)^p = (g_{2i-1})^p$ for all $1\leq i \leq \lfloor n/2 \rfloor$, so $(g_1)^{pq}g_1^r = ((g_{2i-1})^p)^qg_{2i-1}^r = (g_1)^{pq}g_{2i-1}^r$, and therefore $g_1^r = g_{2i-1}^r$ for all such $i$. However, $p$ is the minimal positive integer with this property, so $r=0$.

We prove the contrapositive: assume $p < m$ and let $H = \langle g_1^p \rangle$. It suffices to show that $H$ is normal in $G$ and that $G/H$ is non-cyclic.

For normality, note that $g_1^p = g_3^p$ implies $g_1^p = g_4^p$ because $\sigma_3 \sigma_4$ conjugates $\sigma_3$ to $\sigma_4$ while fixing $\sigma_1$. Then $g_2 (g_1^p)^a g_2\inv = g_2 (g_4^p)^a g_2\inv = (g_4^p)^a = (g_1^p)^a$, and that $g_j (g_1^p)^a g_j\inv = (g_1^p)^a$ for any $j \neq 2$. 

Say $G/H$ is cyclic. Then, $g_1H = g_4H$ (being the images of $\s_1,\s_4$ under the composition of quotients $B_n \to G \to G/H$), so we have $g_1 = g_4(g_1^p)^a = g_4(g_4^p)^a$ for some $a$. Conjugating the above equality by $x = g_1g_2$, we obtain $g_2 = g_4(g_4^p)^a = g_4(g_1^p)^a = g_1$, so $g_1 = g_2$, contradicting the non-cyclicity of $G$.

\end{proof}

\section{Appendix}

Our computer code \cite{code}, which can be found at {\ttfamily https://github.com/Noah-Caplinger/Small- Quotients-of-Braids}, is split between three files: {\ttfamily TotallySymmetricSets.sage}, {\ttfamily Homomorphism CheckingAlgorithms.sage}, and {\ttfamily Results.sage}

\vspace{.3cm}

{\ttfamily TotallySymmetricSets.sage} contains all basic functions relating to totally symmetric sets---most notably a function which list all totally symmetric sets in in a group, and another which considers these totally symmetric sets up to conjugation, that is, returns a list containing a single totally symmetric set from each conjugacy class. For convenience, we use totally symmetric \textit{lists}---a totally symmetric set comes equipped with an ordering inherited with Sage's {\ttfamily conjugacy\_class()} method.

This file also contains permutation representations used for each group under consideration. Unless otherwise marked, these representations come from the Atlas of Finite Group Representations.

{\ttfamily HomomorphismCheckingAlgorithms.sage} Contains the functions used to check whether certain homomorphisms $B_n, B_n' \to G$ can exist. Fundamentally, these algorithms are just checking whether there are valid images in $G$ for the generators of $B_n$ and $B_n'$ which also obey the relations of their respective presentation. We use the standard presentation for $B_n$, and Lin's presentation of $B_n'$, which has generators

$$u = \s_2, \s_1\inv \qquad v = \s_1\s_2\s_1^{-2} \qquad w = \s_2\s_3\s_1\inv\s_2\inv \qquad  c_i = \s_{i+2}\s_1\inv \text{ for } 1 \leq i \leq n-3.$$ 

\noindent
A full list of relations can be found in \cite{Lin}. 

The main novelty of our approach is to pre-compute all totally symmetric sets in $G$, then use this to inform our otherwise exhaustive search for images of generators. For instance, when checking for non-trivial homomorphisms $B_7' \to G$, we know the totally symmetric sets $\{c_1,c_3\}$ and $\{c_2,c_4\}$ must be sent to two totally symmetric set of cardinality 2 in $G$. After computing all such totally symmetric sets directly, we can loop through all possible combinations of images for the totally symmetric sets instead of looping through all images of the elements $c_1,c_2,c_3,$ and $c_4$ directly. Furthermore, because any non-trivial map has non-trivial conjugates, we may consider only the totally symmetric sets of $G$ up to conjugation as images of $\{c_1,c_3\}$. This drastically reduces the search space.

Although we are not prepared to make any mathematical claims about the complexity of this algorithm, from a practical point of view it has made many computations feasible. In particular, the $n=8$ checks would be infeasible using a naive algorithm. 

The file {\ttfamily TotallySymmetricSets.sage} simply executes all homomorphism checks used in this paper. Run this file to reproduce these computations.

\vspace{1cm}

Next, we give a proof of lemma \ref{TSS_in_linear}.

\begin{proof}
Assume without loss of generality we are working over an algebraically closed field.

If any element of $S$ is diagonalizable, every element of $S$ is. In this case, we have commuting diagonalizable matrices, which can be simultaneously diagonalized. Then there is some matrix $X$ so that $XSX\inv$ is totally symmetric of diagonal matrices. Because they are conjugate, elements of $XSX\inv$ have the same eigenvalues. There are only $2$ distinct $2\times 2$ diagonal matrices with the same eigenvalues, so $|S| \leq 2$.

Let $S = \{A,B,C\}$ be a totally symmetric set of non-diagonalizable matrices. Then, there are some $v, \lambda$ so that $Av = \lambda v$. Then, 

$$A(Bv) = B(Av) = B \lambda v = \lambda Bv$$

So, $Bv$ is an eigenvalue of $A$. Because $S$ is not diagonalizable, the eigenspace of $\lambda$ (with respect to $A$) must be 1-dimensional, so $Bv$ is a scalar multiple of $v$. Then, $v$ is also an eigenvector of $B$ of the same eigenvalue since we assume $A$ is non-diagonalizable, and $A,B$ share eigenvalues.   

Choose some $w$ so that $\{v,w\}$ is a basis, and let $P$ be the change of basis matrix $\begin{pmatrix}
\vert & \vert \\
v & w \\
\vert & \vert 
\end{pmatrix}$

Then, $PSP\inv$ is a set of three non-diagonalizable upper-triangular matrices with eigenvalue $\lambda$:

$$PSP\inv = \left \{ \begin{pmatrix}
\lambda & a \\
0& \lambda 
\end{pmatrix},\begin{pmatrix}
\lambda & b \\
0& \lambda 
\end{pmatrix},\begin{pmatrix}
\lambda & c \\
0& \lambda 
\end{pmatrix} \right \} $$

Because $PSP\inv$ is totally symmetric, there is some matrix $\begin{pmatrix}
x & y \\
z & w
\end{pmatrix}$ which transposes $PAP\inv$ and $PBP\inv$ by conjugation, and fixes $PCP\inv$. That is,

$$\begin{pmatrix}
x & y \\
z & w
\end{pmatrix} \begin{pmatrix}
\lambda & a \\
0& \lambda 
\end{pmatrix} \begin{pmatrix}
x & y \\
z & w
\end{pmatrix}\inv = \begin{pmatrix}
\lambda & b \\
0& \lambda 
\end{pmatrix}$$

By inspection, $\begin{pmatrix}
x \\
z
\end{pmatrix}$ is an eigenvalue of the LHS, but the only eigenvalue of the RHS are of the form $\begin{pmatrix}
s \\ 0
\end{pmatrix}$, so we must have $z = 0$. A computation shows 

$$\begin{pmatrix}
x & y \\
0 & w
\end{pmatrix} \begin{pmatrix}
\lambda & c \\
0& \lambda 
\end{pmatrix} \begin{pmatrix}
x & y \\
0 & w
\end{pmatrix}\inv = \begin{pmatrix}
\lambda & \frac{x}{w}c \\
0& \lambda 
\end{pmatrix}.$$

Because this matrix fixes $PCP\inv$, we know that $\frac{x}{w}c = c$, so $\frac{x}{w} = 1$. 

The same computation above shows 

$$\begin{pmatrix}
x & y \\
0 & w
\end{pmatrix} \begin{pmatrix}
\lambda & a \\
0& \lambda 
\end{pmatrix} \begin{pmatrix}
x & y \\
0 & w
\end{pmatrix}\inv = \begin{pmatrix}
\lambda & \frac{x}{w}a \\
0& \lambda 
\end{pmatrix} = \begin{pmatrix}
\lambda & a \\
0& \lambda 
\end{pmatrix}.$$

Thus $\begin{pmatrix}
x & y \\
z & w
\end{pmatrix}$ does not transpose $PAP\inv$ and $PBP\inv$ as we assumed.

\end{proof}

\bibliographystyle{plain}
\bibliography{bib}

\begin{thebibliography}{10}

\bibitem{Artin}
E.~Artin.
\newblock Braids and permutations.
\newblock {\em Ann. of Math}, (2):643--649, 1947.

\bibitem{code}
Noah Caplinger.
\newblock Code for small quotients of braid groups,
  https://github.com/noah-caplinger/small-quotients-of-braids, 2020.

\bibitem{finite_quotients}
A.~Chudnovsky, K.~Kordek, Q.~Li, and C.~Partin.
\newblock Finite quotients of braid groups.
\newblock {\em Geom Dedicata}, (207):409---416, 2020.

\bibitem{Primer}
B.~Farb and D.~Margalit.
\newblock {\em A Primer on Mapping Class Groups}.
\newblock Princeton Mathematical Series. Princeton University Press, 2012.

\bibitem{PSL_subgroups}
Tim Fritzsche.
\newblock The depth of subgroups of psl(2,q).
\newblock {\em Journal of Algebra}, (349):217--233, 2011.

\bibitem{KielakPierro}
Dawid Kielak and Emilio Pierro.
\newblock On the smallest non-trivial quotients of mapping class groups.
\newblock {\em Groups, Geometry, and Dynamics}, 05 2017.

\bibitem{first_TSS}
Kevin Kordek and Dan Margalit.
\newblock Homomorphisms of commutator subgroups of braid groups.
\newblock {\em arXiv e-prints}, page arXiv:1910.06941, 2019.

\bibitem{Lin}
Vladimir Lin.
\newblock Braids and permutations.
\newblock {\em arXiv e-prints}, 1996.

\bibitem{CFSG}
Robert Wilson.
\newblock {\em The Finite Simple Groups}, volume 251 of {\em Graduate Texts in
  Mathematics}.
\newblock Springer-Verlag London, Ltd., London, 2009.

\bibitem{zimmermann2012}
Bruno~P. Zimmermann.
\newblock On minimal finite quotients of mapping class groups.
\newblock {\em Rocky Mountain J. Math.}, 42(4):1411--1420, 08 2012.

\end{thebibliography}

\end{document}